\newtheorem{theorem}{Theorem}
\newtheorem{lemma}{Lemma}
\theoremstyle{definition}
\newtheorem{conjecture}{Conjecture}
\newcommand{\ignore}[1]{}
\title{Combinatorial games played randomly: Chomp and nim}
\author{Pat Devlin\footnote{Swarthmore College, Swarthmore PA, USA \qquad \texttt{pdevlin2@swarthmore.edu}} \and Paulina Trifonova\footnote{Swarthmore College, Swarthmore PA, USA \qquad \texttt{ptrifon1@swarthmore.edu}}}
\date{January 30, 2024}
\begin{document}

\maketitle

\renewcommand{\thefootnote}{\fnsymbol{footnote}}
\footnotetext{AMS 2020 subject classification: 91A46, 60C05, 05C81, 91A05, 00A08} 
\footnotetext{Key words and phrases:  Chomp, nim, combinatorial game theory, random strategies, random walks}

\setcounter{footnote}{0}
\renewcommand{\thefootnote}{\arabic{footnote}}

\abstract{
In this note, we investigate combinatorial games where both players move randomly (each turn, independently selecting a legal move uniformly at random).  In this model, we provide closed-form expressions for the expected number of turns in a game of Chomp with any starting condition.  We also derive and prove formulas for the win probabilities for any game of Chomp with at most two rows.  Additionally, we completely analyze the game of nim under random play by finding the expected number of turns and win probabilities from any starting position.
}

\section{Introduction}
Combinatorial games are a popular object of study in the field of discrete math.  Typically, the primary research goal when studying combinatorial games is to understand how a game ought to proceed under optimal play and to categorize which moves are optimal.  See \cite{berlekamp2004winning} for a comprehensive text on the general theory of combinatorial games, and see \cite{demaine2001playing} for a general discussion of algorithmic aspects of the topic.

In this paper, we study combinatorial games where both players move randomly: that is, each turn a player independently selects their move uniformly at random from among all legal options.  Incorporating randomness into otherwise deterministic games is not new: earlier researchers have studied games where the next player is chosen randomly \cite{celis2010local} as well as games where one player (but not all) acts randomly \cite{krawec2012analyzing,krawec2015n} as a proxy for novice skill level.  The idea of studying games where both players move randomly has also been explicitly discussed by Beck \cite{beck1996foundations, beck2005positional}, who used this random model to study a class of \textit{maker-breaker games} related to Ramsey theory \cite{hefetz2014positional}.  That said, although the topic of games where both players play randomly is a natural question (amounting to studying a random walk on the game graph), this topic has yet to be investigated for a variety of well-known games.  In this paper we address this by studying the classic games of Chomp and nim.

Chomp is a famous combinatorial game, which was first introduced by Gale \cite{gale1974curious} and popularized in Martin Gardner's \textit{Mathematical Games} column \cite{sciAm}.  In Chomp, discrete cells are arranged into a board, and players take turns selecting a cell to ``chomp."  When a cell is chomped, the board is updated so as to remove every cell except those that are either above or to the left of the selected cell.  That is, if a cell $(x,y)$---in row $x$ and column $y$---is selected, then this removes every cell $(p,q)$ where $p \geq x$ and $q \geq y$.  Gameplay continues until a player selects the uppermost leftmost cell, thus losing the game.

One of the most intriguing aspects of Chomp is that when starting with a rectangular board, it is relatively easy to see that the first player has a winning move by appealing to an elegant strategy-stealing argument.  However, finding a general winning strategy for boards with more than 2 rows is still an open question.  To quote \cite{soltys2011complexity}, ``this is what makes Chomp so curious: it is easy to establish [for rectangular boards] that player 1 can win; what is computationally difficult is to establish how."

Previous research has categorized winning moves for boards having at most 2 rows as well as boards whose second row has only one cell, and partial progress has been made on configurations involving three rows \cite{zeilberger2001three, sun2002improvements,brouwer2005three} as well as configurations containing two rows of size $n$ and a fixed number of additional cells \cite{byrnes2003poset, zeilberger2004chomp}.  Additional work has revealed a surprising connection from physics using a renormalization technique to predict which moves are optimal \cite{friedman2007nonlinear}.  Closer to our work, researchers have also studied versions of Chomp where one player (but not all) moves randomly \cite{krawec2012analyzing, krawec2015n}.

\subsection*{Our Results}
We study the setting where Chomp is played with two random players.  For notation, we number the rows and columns beginning with the top-left cell $(1,1)$.  Given a Chomp board $B$, we say $(x,y) \in B$ iff $B$ contains a cell in row $x$ and column $y$.  Thus, if a player chooses to chomp a cell at $(x,y)$, then doing so would remove every cell $(p,q)$ that satisfies $x \leq p$ and $y \leq q$.  With this, we state our first result.

\begin{theorem}\label{chomp:expected}
A game of random Chomp starting starting from $B$ has expected length
\[
\mathbb{E}[\text{\# turns starting from $B$}] = \sum_{(x,y) \in B} \frac{1}{xy}, 
\]
where the summation is taken over all cells $(x,y)$ in the board $B$.

In particular, if $B$ is an $m \times n$ rectangle, then the expected length is $\sum_{j=1}^{m} \frac{1}{j} \times \sum_{j=1}^{n} \frac{1}{j}$.
\end{theorem}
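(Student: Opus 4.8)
The plan is to compute the expected length by linearity of expectation. On each turn exactly one cell is selected (``chomped''), and since a selected cell must still be on the board it differs from every previously chomped cell; hence the number of turns equals the number of cells of $B$ that are ever chomped \emph{directly}, as opposed to being removed only as collateral when some other cell is chomped. Thus, letting $p_{x,y}$ denote the probability that cell $(x,y)$ is eventually chomped directly, we get $\mathbb{E}[\#\text{ turns from }B] = \sum_{(x,y)\in B} p_{x,y}$, and it suffices to show $p_{x,y} = 1/(xy)$ for every cell.

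The key reduction is the observation that $(x,y)$ is chomped directly if and only if it is the \emph{first} among the cells of the rectangle $R_{x,y} := \{(p,q) : 1 \le p \le x \text{ and } 1 \le q \le y\}$ to be chomped. Indeed, $R_{x,y}$ is precisely the set of cells whose selection removes $(x,y)$, and (by the staircase shape of Chomp positions) $R_{x,y}$ is entirely contained in $B$; as soon as any cell of $R_{x,y}$ is selected, $(x,y)$ leaves the board, so $(x,y)$ is chomped directly exactly when the first selected cell of $R_{x,y}$ is $(x,y)$ itself. It therefore remains to prove the claim: for every Chomp position $B$ and every $(x,y) \in B$, the probability that $(x,y)$ is the first cell of $R_{x,y}$ to be chomped under uniformly random play equals $1/|R_{x,y}| = 1/(xy)$.

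I would prove this claim by induction on $|B|$, conditioning on the first move. The first move is uniform over the $|B|$ cells of $B$. If it selects a cell of $R_{x,y}$, then that cell is automatically the first cell of $R_{x,y}$ chomped, so among those $|R_{x,y}|$ outcomes exactly one is favorable (namely, the move $(x,y)$). If instead the first move selects a cell $(a,b) \notin R_{x,y}$ — so $a > x$ or $b > y$ — then no cell of $R_{x,y}$ is removed (a cell $(p,q)\in R_{x,y}$ has $p \le x$ and $q \le y$, hence cannot satisfy both $p \ge a$ and $q \ge b$), so $R_{x,y}$ still sits inside the resulting position $B'$ with $|B'| < |B|$, and the inductive hypothesis applies to $B'$ to give conditional probability $1/(xy)$. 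Summing, $p = \tfrac{1}{|B|}\bigl(1 + (|B| - xy)\cdot\tfrac{1}{xy}\bigr) = \tfrac{1}{xy}$, and the base case $B = \{(1,1)\}$ is trivial. I expect this inductive/symmetry step — setting up the conditioning correctly and checking that $R_{x,y}$ survives any move outside it — to be the only real content; the rest is bookkeeping. Finally, the rectangular case is immediate since $\sum_{x=1}^{m}\sum_{y=1}^{n}\tfrac{1}{xy} = \bigl(\sum_{j=1}^{m}\tfrac1j\bigr)\bigl(\sum_{j=1}^{n}\tfrac1j\bigr)$.
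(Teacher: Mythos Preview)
Your proof is correct and takes a genuinely different route from the paper's. The paper proves the formula by a direct induction on the global recurrence $\mathbb{E}[B] = 1 + \frac{1}{|B|}\sum_{D \lessdot B}\mathbb{E}[D]$: substituting the inductive hypothesis for each $\mathbb{E}[D]$, swapping the order of summation, and then using the count $\#\{D \lessdot B : (x,y) \in D\} = |B| - xy$ --- which is exactly your observation that precisely the $xy$ cells of $R_{x,y}$ kill $(x,y)$. You instead decompose the expectation cell-by-cell via indicator variables and prove $p_{x,y} = 1/(xy)$ for each cell separately, again by induction on $|B|$ and conditioning on the first move. The underlying combinatorial fact and even the final one-line arithmetic $\frac{1}{|B|}\bigl(1 + (|B|-xy)\cdot\frac{1}{xy}\bigr)=\frac{1}{xy}$ are essentially the same in both arguments, but your framing buys something extra: a clean probabilistic interpretation of each summand ($1/(xy)$ is literally the probability that $(x,y)$ is the first cell of its dominating rectangle $R_{x,y}$ to be selected), and it makes transparent why the answer splits additively over cells regardless of the global shape of $B$. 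The paper's version is a little shorter on the page but gives less intuition for the individual terms.
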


The above formula conveniently splits as a sum over the individual cells in the starting board, which allows us to precisely characterize the starting boards that take the most (or fewest) expected number of turns given a fixed number of cells.

\begin{theorem}\label{chomp:extremal shapes}
If $B$ is a Chomp board with $N$ cells, then we have
\[
\sum_{j=1} ^{N} \dfrac{1}{j} \leq \mathbb{E}[\text{\# turns starting from $B$}] \leq (1/2 + o(1))\left( \sum_{j=1} ^{N} \dfrac{1}{j} \right)^2,
\]
where $o(1)$ denotes a term tending to $0$ as $N \to \infty$.  Moreover, there are boards having $N$ cells attaining these lower and upper bounds.
\end{theorem}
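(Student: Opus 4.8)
The plan is to apply Theorem~\ref{chomp:expected} to replace the game quantity by the purely combinatorial functional $f(B) := \sum_{(x,y)\in B}\frac1{xy}$, and then to establish the two bounds as extremal properties of $f$ over the family of Chomp boards with $N$ cells (equivalently, over Young-diagram shapes with $N$ cells). Throughout I will write $H_m = \sum_{j=1}^m \frac1j$, so that the claimed inequalities read $H_N \le f(B) \le (1/2+o(1))H_N^2$, and use the elementary facts $H_m = \ln m + O(1)$, $\sum_{x\le m}\frac1x = \ln m + O(1)$, and $\sum_{x\le m}\frac{\ln x}{x} = \tfrac12(\ln m)^2 + O(1)$.

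For the lower bound I would encode $B$ by its row lengths $\lambda_1 \ge \lambda_2 \ge \cdots \ge \lambda_k$ with $\sum_i \lambda_i = N$, and set $S_i = \lambda_1 + \cdots + \lambda_i$ (so $S_0 = 0$ and $S_k = N$). The cornerstone is the termwise inequality: for each $i$ and each $1 \le j \le \lambda_i$ we have $S_{i-1} \ge (i-1)\lambda_{i-1} \ge (i-1)\lambda_i \ge (i-1)j$, hence $S_{i-1}+j \ge ij$ and therefore $\frac1{ij} \ge \frac1{S_{i-1}+j}$. Summing over all cells and telescoping, $f(B) = \sum_{i=1}^k\sum_{j=1}^{\lambda_i}\frac1{ij} \ge \sum_{i=1}^k\sum_{j=1}^{\lambda_i}\frac1{S_{i-1}+j} = \sum_{i=1}^k (H_{S_i} - H_{S_{i-1}}) = H_N$. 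Equality holds when $k=1$, i.e.\ for the single row (or, by symmetry, the single column) with $N$ cells, which is the extremal board here.

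For the upper bound I would first reduce to a canonical shape by an exchange argument: let $B^*$ be any set of $N$ lattice points attaining the $N$ smallest values of the product $xy$ (ties at the boundary value broken arbitrarily). Then $f(B) \le f(B^*)$ for every $N$-cell board $B$, since $|B\setminus B^*| = |B^*\setminus B|$ and every point of $B^*\setminus B$ has $xy$-value no larger than every point of $B\setminus B^*$, so a term-by-term pairing gives $f(B^*)-f(B) = \sum_{B^*\setminus B}\frac1{xy} - \sum_{B\setminus B^*}\frac1{xy} \ge 0$. Next, $B^* \subseteq \{(x,y): xy \le T\}$ where $T$ is least with $D(T) \ge N$ and $D(T) := \#\{(x,y)\in\mathbb{Z}_{\ge 1}^2 : xy \le T\} = \sum_{x\le T}\lfloor T/x\rfloor$, so it suffices to bound $\sum_{xy\le T}\frac1{xy} = \sum_{x\le T}\frac1x H_{\lfloor T/x\rfloor}$. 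Using $H_{\lfloor T/x\rfloor} \le \ln(T/x) + 1$ and the estimates above, this is at most $(\ln T + 1)(\ln T + O(1)) - \tfrac12(\ln T)^2 + O(1) = \tfrac12(\ln T)^2 + O(\ln T)$. Finally, since $D(T) = T\ln T\,(1+o(1))$ and $D(T-1) < N \le D(T)$ we get $N = T\ln T\,(1+o(1))$, hence $\ln N = \ln T + \ln\ln T + o(1)$ and $\ln T/\ln N \to 1$; combined with $H_N = \ln N + O(1)$ this yields $f(B) \le \tfrac12(\ln T)^2 + O(\ln T) = (1/2+o(1))H_N^2$.

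To see the upper bound is attained, take $B_N := \{(x,y): xy \le T-1\} \cup C$, where $C$ is any $\bigl(N - D(T-1)\bigr)$-element subset of $\{(x,y): xy = T\}$ (which has exactly $\tau(T) = D(T)-D(T-1) \ge N - D(T-1)$ elements); one checks that adjoining cells on the hyperbola $xy = T$ to the down-set $\{xy \le T-1\}$ never breaks downward-closure, so $B_N$ is a genuine Chomp board with exactly $N$ cells. The same estimates applied in both directions give $\sum_{xy\le T-1}\frac1{xy} \le f(B_N) \le \sum_{xy\le T}\frac1{xy}$, both ends equal to $\tfrac12(\ln T)^2(1+o(1)) = (1/2+o(1))H_N^2$. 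The routine parts are the asymptotics of $\sum_{x\le m}\frac{\ln x}{x}$ and of the divisor sum $D(T)$; I expect the main thing to get right is the chain of inequalities in the exchange step (making sure $B^*$ is the right object and the pairing is valid) together with the bookkeeping that passes from the threshold $T$ back to $N$ via $D(T)\sim T\ln T$ --- this is where the $\ln\ln$ corrections appear and must be shown to be absorbed into the $o(1)$.
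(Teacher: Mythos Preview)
Your proof is correct and follows essentially the same route as the paper. For the lower bound, the paper phrases the argument via lexicographic rank (showing $\mathrm{rank}(x,y)\ge xy$), which is exactly your inequality $S_{i-1}+j\ge ij$ once one notes that the lexicographic rank of cell $(i,j)$ is $S_{i-1}+j$; for the upper bound, the paper uses the same greedy hyperbola construction $\mathcal P_\tau=\{xy\le\tau\}$ and the same asymptotics $\sum_{xy\le t}\tfrac1{xy}=\tfrac12(\log t)^2+O(\log t)$ and $|\mathcal P_t|=t\log t\,(1+o(1))$, citing Apostol where you sketch the elementary derivations.
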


Determining the likelihood that a given player wins is considerably more nuanced.  To this end, we prove the following, addressesing boards with at most 2 rows.
\begin{theorem}\label{chomp:probability}
For integers $0 \leq k \leq n$, let $P(n,k)$ denote the probability that Player 1 wins starting from a board having $n$ cells in its first row and $k$ in its second.  Then for all $n \geq 0$, we have
\[
P(n,0) = \begin{cases}
1 \qquad &\text{if $n=0$},\\
0 \qquad &\text{if $n=1$},\\
1/2 \qquad &\text{if $n\geq 2$.}
\end{cases}
\]

Furthermore, recursively define two integer sequences $\alpha_k$ and $\beta_k$ as follows.  As base cases, $\alpha_1 = 1$ and $\beta_1 = -1$, and for $k \geq 1$, we define
\begin{eqnarray*}
\alpha_{k+1} &=& (4k^2 + k) \alpha_k + \beta_k\\
\beta_{k+1} &=& -k(k+1) \alpha_k + (4k^2 -k-1) \beta_k.
\end{eqnarray*}
We have $P(1,1) = 1/2$, and for all $n \geq k \geq 1$, if $(n,k) \neq (1,1)$ then
\[
P(n,k) = \dfrac{1}{2} - \dfrac{n \alpha_k + \beta_k}{(2(k-1))! \cdot (n+k)(n+k-1)(n+k-2)}.
\]
\end{theorem}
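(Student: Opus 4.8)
The plan is to condition on Player 1's first (random) move. From a board with $n$ cells in row 1 and $k$ in row 2 there are exactly $n+k$ legal chomps, each chosen with probability $1/(n+k)$: chomping $(1,j)$ produces the board $(j-1,\min(k,j-1))$, while chomping $(2,j)$ produces $(n,j-1)$; and Player 1 wins with probability $1-P(B')$ after moving to $B'$. This yields
\[
(n+k)P(n,k)=\sum_{j=1}^{n}\bigl(1-P(j-1,\min(k,j-1))\bigr)+\sum_{j=1}^{k}\bigl(1-P(n,j-1)\bigr).
\]
It is cleaner to track the bias $Q(n,k):=\tfrac12-P(n,k)$; substituting $P=\tfrac12-Q$ cancels the constants and gives the homogeneous recurrence $-(n+k)Q(n,k)=\sum_{j=1}^{n}Q(j-1,\min(k,j-1))+\sum_{m=0}^{k-1}Q(n,m)$, with boundary data $Q(0,0)=-\tfrac12$, $Q(1,0)=\tfrac12$, $Q(n,0)=0$ for $n\ge 2$ (these last claims are the formula for $P(n,0)$, which falls out of a one-line induction: if $q_i=1/2$ for $2\le i\le n-1$ then $\sum_{i=0}^{n-1}q_i=n/2$, forcing $q_n=1/2$). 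We also set $Q(1,1)=0$, matching $P(1,1)=\tfrac12$.

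Next I would prove $Q(n,k)=\dfrac{n\alpha_k+\beta_k}{(2(k-1))!\,(n+k)(n+k-1)(n+k-2)}$ by induction on $k$, and within each $k$ by induction on $n\ge k$. For the base case $k=1$ the recurrence reduces to $-(n+1)Q(n,1)=-\tfrac12+\sum_{m=2}^{n-1}Q(m,1)$ for $n\ge 2$, and a telescoping induction gives $Q(n,1)=\tfrac1{n(n+1)}$, which is the asserted formula since $\alpha_1=1,\beta_1=-1$. For the inductive step ($k\ge2$) I note first that the recurrence evaluated at $n=k$ writes $Q(k,k)$ entirely in terms of $Q(0,0)$, the diagonal values $Q(m,m)$ with $m<k$, and the values $Q(k,m)$ with $m<k$ — all known by the outer hypothesis — which fixes $Q(k,k)$ and serves as the base of the inner induction. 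For $n>k$ I would subtract the recurrence at $n-1$ from the one at $n$; the cumulative sums collapse and leave the local relation
\[
(n+k)Q(n,k)=(n+k-2)Q(n-1,k)-\sum_{m=0}^{k-1}\bigl(Q(n,m)-Q(n-1,m)\bigr).
\]
Substituting the conjectured linear-over-cubic form for $Q(n,k)$ and $Q(n-1,k)$, the two left/first-right terms combine (after pulling out the common factor $\tfrac1{(2(k-1))!(n+k-1)}$ and using that $N_k(n):=n\alpha_k+\beta_k$ is linear) to $\dfrac{(k-2)\alpha_k-\beta_k}{(2(k-1))!\,(n+k-1)(n+k-2)(n+k-3)}$, so the inductive step becomes the single identity
\[
\sum_{m=0}^{k-1}\bigl(Q(n,m)-Q(n-1,m)\bigr)=\frac{\beta_k-(k-2)\alpha_k}{(2(k-1))!\,(n+k-1)(n+k-2)(n+k-3)}.
\]

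The heart of the argument, and the step I expect to be the main obstacle, is proving this last identity. By the outer induction hypothesis each summand on the left is an explicit rational function of $n$ (the $m=0$ term vanishes since $n-1\ge k\ge2$), so the left side is a sum of $k-1$ rational functions that must collapse to a single term with denominator $(n+k-1)(n+k-2)(n+k-3)$; this forces a partial-fraction telescoping, and matching the resulting constant against the right-hand side is exactly what dictates the two-term recursion defining $\alpha_k$ and $\beta_k$ (and shows they are integers). I would carry this out by exhibiting an auxiliary function $V(n,m)$ with $Q(n,m)-Q(n-1,m)=V(n,m)-V(n,m+1)$, so that the sum telescopes to $V(n,0)-V(n,k)$, and then read the recursion for $(\alpha_k,\beta_k)$ directly off its coefficients; the same partial-fraction bookkeeping also verifies that the value of $Q(k,k)$ produced by the inner base case agrees with the closed form. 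Throughout one must keep careful track of the exceptional boundary values $Q(0,0)=-\tfrac12$, $Q(1,0)=\tfrac12$, $Q(1,1)=0$ wherever they enter the sums (these are precisely the places where the displayed formula is not applied, and why $(n,k)=(1,1)$ is excluded — there the denominator vanishes). Everything else is routine algebra.
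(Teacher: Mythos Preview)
Your proposal is correct and follows essentially the same route as the paper: both rewrite the recurrence in terms of the bias $Q(n,k)=\tfrac12-P(n,k)$ (the paper calls this $C(n,k)$), and both reduce Theorem~\ref{chomp:probability} to verifying a purely algebraic identity in which the recursion defining $\alpha_k,\beta_k$ is exactly what is needed to close the induction.

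The organizational choices differ. You take a first difference in $n$ to kill the cumulative sums, leaving the local identity
\[
\sum_{m=0}^{k-1}\bigl(Q(n,m)-Q(n-1,m)\bigr)=\frac{\beta_k-(k-2)\alpha_k}{(2(k-1))!\,(n+k-1)(n+k-2)(n+k-3)},
\]
which you then plan to telescope in $m$ via an auxiliary $V$. The paper instead keeps the full cumulative identity, introduces a closed form
\[
g(n,k)=\sum_{j=k}^{n-1}C(j,k)
=\frac{(n-k)\bigl[\alpha_k(k^2+3k(n-1)-2n+2)+\beta_k(3k+n-3)\bigr]}{4(k-1)(2k-1)(2(k-1))!\,(n+k-1)(n+k-2)},
\]
and inducts on $k$ with $n$ a free real parameter, symbolically checking $(n+k+1)C(n,k+1)-(n+k)C(n,k)=-C(k,k)-C(n,k)-g(n,k+1)+g(n,k)$. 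The advantage of the paper's arrangement is that the diagonal values $C(k,k)$ are absorbed one at a time into the induction step, so no separate base-case verification of $Q(k,k)$ is needed; in your scheme the diagonal sum $\sum_{m<k}Q(m,m)$ resurfaces at the inner base case $n=k$, and ``the same partial-fraction bookkeeping'' you allude to there is real additional work that the paper's organization sidesteps. Either way the content is the same algebraic verification.
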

At the heart of the above theorem is a mutual recurrence relation involving the sequences $\alpha$ and $\beta$, which allows us to find the win probabilities for any Chomp board having at most two rows.\footnote{Note the seemingly exceptional value $P(1,1)$ actually arises simply because otherwise the fraction would be of the form $0/0$.}  If desired, the mutual recurrence defining $\alpha$ and $\beta$ can be neatly written as the matrix-vector equation
\[
\begin{bmatrix}
\alpha_{k+1}\\
\beta_{k+1}
\end{bmatrix} = \begin{pmatrix}
k(4k+1) & 1\\
-k(k+1) & 4k^2 - k -1
\end{pmatrix}\begin{bmatrix}
\alpha_{k}\\
\beta_{k}
\end{bmatrix}
\]
with initial condition $\begin{bmatrix}
\alpha_{1}\\
\beta_{1}
\end{bmatrix} = \begin{bmatrix}
1\\
-1
\end{bmatrix}$.
By computing terms of these sequences as in Table \ref{table:sequence values}, we see for instance that $\alpha_4 = 2576$ and $\beta_4 = -3248$, thus for all $n \geq 4$
\[
P(n,4) = \dfrac{1}{2} - \dfrac{2576 n -3248}{6! \cdot (n+4)(n+3)(n+2)}. 
\]

\begin{table}[h]\label{table:sequence values}
\begin{center}
\begin{tabular}{c||c|c}
$k$ & $\alpha_k$ & $\beta_k$\\ \hline \hline
1 & 1 & -1\\ \hline
2 & 4 & -4\\ \hline
3 & 68 & -76\\ \hline
4 & 2576 & -3248\\ \hline
5 & 171920 & -243152\\ \hline
6 & 17808448 & -28013888\\ \hline
7 & 2643253312 & -4585857472\\ \hline
8 & 531994564864 & -1010163390208
\end{tabular}
\caption{Values of $\alpha_k$ and $\beta_k$ computed from their definition}
\end{center}
\end{table}

Our proof of Theorem \ref{chomp:probability} is essentially an induction argument, but the motivation behind the sequence definitions---and the courage to suspect such a formula might be true in the first place---took some time for the authors to manifest.  Although we spare the reader the details, the result was discovered by first computing many values of $P(n,k)$ and then experimentally finding formulas for $P(n,0)$, $P(n,1)$, \ldots, and $P(n,8)$.  Noticing the form $P(n,k) = 1/2- (A_k n + B_k) / ((n+k)(n+k-1)(n+k-2))$, we conjectured such a result holds without yet knowing how the coefficients $A_k$ and $B_k$ might be defined.  Supposing the truth of our conjecture, we were then able to derive the relevant recurrence relations that would need to be true.  A glimpse of this can be seen in the proof of our Lemma \ref{lemma:induction}, where the truth of the final sentence is equivalent to the validity of the recursive definitions for $\alpha$ and $\beta$ as given above.

In order to better understand our formula for $P(n,k)$, we turn our attention to the sequences $\alpha_k$ and $\beta_k$.

\begin{theorem}\label{chomp:corollary}
With notation from Theorem \ref{chomp:probability}, for all $k \geq 2$ we have
\[
0 < \alpha_k \leq -\beta_k \leq (k-1) \alpha_k.
\]
Moreover, for all $k \geq 1$ we have $4^{k-1} [(k-1)!]^2 \leq \alpha_{k} \leq k 4^{k-1} [(k-1)!]^2$.

As a consequence, for all $n \geq k \geq 0$, if $(n,k) \neq (0,0)$ then we have
\[
P(n,k) \leq \dfrac{1}{2}.
\]
And furthermore, if $n \geq k \geq 1$ and $(n,k) \neq (1,1)$ then we have
\[
P(n,k) \leq \dfrac{1}{2} - \dfrac{(n+1-k) \sqrt{k}}{(n+k)(n+k-1)(n+k-2)} < \dfrac{1}{2}
\]
and
\[
P(n,k) \geq \dfrac{1}{2} - \dfrac{2n k^{1.5}}{(n+k)(n+k-1)(n+k-2)}.
\]
\end{theorem}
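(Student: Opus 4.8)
The plan is to first pin down the arithmetic of the auxiliary sequences and only then read off the probability bounds by substituting into the closed form of Theorem~\ref{chomp:probability}. Write $\gamma_k := -\beta_k$, so that the first claim reads $0 < \alpha_k \le \gamma_k \le (k-1)\alpha_k$ for $k \ge 2$. I would prove this by induction on $k$, with base case $k = 2$ (where $\alpha_2 = \gamma_2 = 4$). In the inductive step the recurrence gives $\alpha_{k+1} = k(4k+1)\alpha_k - \gamma_k$ and $\gamma_{k+1} = k(k+1)\alpha_k + (4k^2 - k - 1)\gamma_k$; after substituting these and using $\alpha_k \le \gamma_k \le (k-1)\alpha_k$, each of the three required inequalities $\alpha_{k+1} > 0$, $\alpha_{k+1} \le \gamma_{k+1}$, and $\gamma_{k+1} \le k\alpha_{k+1}$ collapses to an elementary polynomial inequality in $k$ that holds for all $k \ge 1$ (for instance $\gamma_{k+1} \le k\alpha_{k+1}$ reduces to $(4k^2-1)\gamma_k \le k(4k^2-1)\alpha_k$, i.e.\ to $\gamma_k \le k\alpha_k$, which is immediate from the hypothesis).

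I would then prove $4^{k-1}[(k-1)!]^2 \le \alpha_k \le k\cdot 4^{k-1}[(k-1)!]^2$ by a second induction (the two inductions can be merged into one simultaneous induction if preferred), now using the first claim to control $\gamma_k$. Since $\alpha_{k+1} = k(4k+1)\alpha_k - \gamma_k$, the bounds $\alpha_k \le \gamma_k \le (k-1)\alpha_k$ sandwich $\alpha_{k+1}$ between $(4k^2+1)\alpha_k$ and $(4k^2 + k - 1)\alpha_k$; inserting the inductive bounds on $\alpha_k$ and comparing with $4^k (k!)^2 = 4k^2\cdot 4^{k-1}[(k-1)!]^2$ reduces the lower bound to $4k^2 + 1 \ge 4k^2$ and the upper bound to $(4k^2 + k - 1)k \le 4k^2(k+1)$, both obvious. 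The cases $k \in \{1,2\}$ are verified by hand.

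With the arithmetic in hand, everything else reduces to a two-sided estimate on the central binomial coefficient. Set $g(m) := 4^m (m!)^2 / (2m)! = 4^m / \binom{2m}{m}$, so that $4^{k-1}[(k-1)!]^2 = g(k-1)\cdot (2k-2)!$. From the ratio identity $g(m)/g(m-1) = 4m^2/(2m-1)^2$ a short induction gives $m + 1 \le g(m)^2 \le 4m$ for all $m \ge 1$ (with $g(0) = 1$), equivalently $\sqrt{k} \le 4^{k-1}[(k-1)!]^2/(2k-2)! \le 2\sqrt{k}$ for $k \ge 1$. Now substitute into the closed form: for $n \ge k \ge 1$ with $(n,k) \neq (1,1)$ the denominator $(2k-2)!\,(n+k)(n+k-1)(n+k-2)$ is positive, the first claim gives $(n+1-k)\alpha_k \le n\alpha_k + \beta_k \le n\alpha_k$, and chaining with the two displayed bounds on $\alpha_k$ yields $(n+1-k)(2k-2)!\sqrt{k} \le n\alpha_k + \beta_k \le 2nk^{1.5}(2k-2)!$; dividing through gives the two stated estimates for $P(n,k)$, with strict inequality below $1/2$ since $n + 1 - k \ge 1$. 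The weaker statement $P(n,k) \le 1/2$ for all $n \ge k \ge 0$ with $(n,k) \neq (0,0)$ follows because the numerator $n\alpha_k + \beta_k$ is nonnegative in every case, and the residual cases $k \in \{0,1\}$ and $(n,k) = (1,1)$ are settled directly from the explicit values $P(n,0) \in \{0, 1/2\}$, $P(1,1) = 1/2$, and $P(n,1) = \tfrac12 - \tfrac{1}{n(n+1)}$.

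The step I expect to be the real obstacle is the central binomial estimate with these exact constants: the naive induction for the upper bound $g(m)^2 \le 4(m+1)$ does not close, and one must instead prove the sharper $g(m)^2 \le 4m$ (whose inductive step collapses to $0 \le 1$). A secondary annoyance is the bookkeeping with $\beta_k$ in the first two inductions --- the whole purpose of isolating the chain $\alpha_k \le \gamma_k \le (k-1)\alpha_k$ is to absorb the $\beta_k$ contribution so that the polynomial inequalities left over hold for every $k$ in range rather than only asymptotically --- along with the customary care at the small values of $n$ and $k$.
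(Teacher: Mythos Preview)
Your approach is essentially identical to the paper's: both proceed by induction on $k$ to establish the chain $0 < \alpha_k \le -\beta_k \le (k-1)\alpha_k$, then sandwich $\alpha_{k+1}/\alpha_k$ to obtain the factorial bounds, invoke the standard two-sided estimate on the central binomial coefficient (the paper cites Matou\v{s}ek--Ne\v{s}et\v{r}il rather than proving it inline as you do), and finally substitute into the closed form, handling $k\in\{0,1\}$ separately. One small slip: your ratio identity should read $g(m)/g(m-1) = 2m/(2m-1)$, so that it is the \emph{squared} ratio that equals $4m^2/(2m-1)^2$; your subsequent inductive estimates on $g(m)^2$ are nonetheless correct.
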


Interestingly, this shows that for any non-empty Chomp board with at most two rows, the first player wins with probability at most 1/2.  If we further assume that the board has more than one row and more than one column, then this probability is strictly less than 1/2.  Thus, although a game of $m \times n$ Chomp is a win for the starting player under optimal play, if there are at most two rows, then it is a disadvantage to go first under random play.  This motivates the following conjectures.

\begin{conjecture}
Suppose Chomp is to be randomly played with a starting configuration of more than one row and more than one column.  The player going first will win with probability strictly less than $0.5$.
\end{conjecture}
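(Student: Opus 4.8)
The plan is to reduce the conjecture to a single inequality and then attempt a strong induction on the number of cells. For a board $B$ write $W(B)$ for the probability that the player to move wins, and for a move $m$ other than chomping the corner $(1,1)$ write $B_m$ for the resulting board. Conditioning on the (uniformly random) first move, and using that chomping $(1,1)$ loses immediately while every other move passes a nonempty board to the opponent, one gets
\[
W(B) = \frac{1}{|B|}\sum_{m \neq (1,1)}\big(1 - W(B_m)\big).
\]
Writing $f(B) := \tfrac12 - W(B)$, this rearranges to
\[
f(B) = \frac{1}{2|B|}\Big(1 - 2\!\!\sum_{m \neq (1,1)}\!\! f(B_m)\Big),
\]
which also correctly gives $f(\{(1,1)\}) = \tfrac12$. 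Thus the conjecture is \emph{equivalent} to the assertion that $\sum_{m \neq (1,1)} f(B_m) < \tfrac12$ for every board $B$ with at least two rows and at least two columns.

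Next I would sort the nonlosing moves from such a $B$; by the transpose symmetry of Chomp together with Theorem \ref{chomp:corollary}, I may assume $B$ has at least three rows and at least three columns. Chomping $(2,1)$ leaves the single top row of $B$ and chomping $(1,2)$ leaves the single left column of $B$, each with at least two cells; since $P(n,0) = 1/2$ for $n \ge 2$ (Theorem \ref{chomp:probability}) and Chomp is transpose-symmetric, both contribute $f(B_m) = 0$. Every remaining nonlosing move lands either on a board with at most two rows or at most two columns --- whose win probability is governed by Theorems \ref{chomp:probability} and \ref{chomp:corollary}, possibly after transposing --- or on a board still having at least three rows and three columns, to which I would apply the inductive hypothesis. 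The difficulty is that strong induction only supplies $f(B_m) > 0$ for the latter positions, whereas the recursion above demands a good \emph{upper} bound on $\sum_{m \neq (1,1)} f(B_m)$. I would therefore strengthen the inductive hypothesis to a quantitative two-sided estimate $0 < f(B) \le c(B)$ for an explicit function $c$ of the shape of $B$ that extends the two-row bounds of Theorem \ref{chomp:corollary}; note the recursion already forces $f(B) \le \frac{1}{2|B|}$ whenever the moves' $f$-values are nonnegative, so these $f$-values decay at least like $1/|B_m|$, and the remaining task is to prove the aggregate bound $\sum_{(x,y) \in B,\ (x,y) \neq (1,1)} f\big(B_{(x,y)}\big) < \tfrac12$ outright.

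That aggregate bound is, I expect, the main obstacle and the reason the statement is only conjectured. A full board with $N$ cells has on the order of $N$ interior moves $(x,y)$ with $x,y \ge 2$, each of which deletes only a handful of cells when $(x,y)$ sits near the lower-right corner, so the $B_m$ are themselves large full boards; their $f$-values are individually tiny but their count grows with $N$, and summing $\frac{1}{2|B_m|}$ over them is not by itself enough. One needs the $f(B_m)$ to be small on average with substantial room to spare, which in turn seems to require understanding $f(B)$ for general full boards far better than is presently possible --- ideally a closed form or sharp asymptotic analogous to what the sequences $\alpha_k,\beta_k$ provide in the two-row case. So I expect the genuine content of a proof to lie in discovering and proving such an estimate, after which the induction would close routinely.

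As an alternative angle one can use the reformulation $W(B) = \Pr[T \text{ is even}]$, where $T$ is the random number of turns, so that the conjecture asserts a parity bias: a randomly played full game more often takes an odd number of turns. One could try to exhibit this bias by coupling game trajectories or constructing a near sign-reversing involution on play sequences. The catch is that any such argument must genuinely use that both dimensions exceed one --- for a single row $P(n,0) = 1/2$ exactly, with no bias --- so it has to detect the interaction between the two directions; I have not found a clean way to do this, and it appears no easier than the inductive route.
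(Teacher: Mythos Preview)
This statement is a \emph{conjecture} in the paper, not a theorem: the paper gives no proof, and in fact explicitly remarks that ``proving it would presumably involve some level of insight into how quickly these probabilities approach $1/2$.'' So there is no proof in the paper to compare against.

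Your write-up is not a proof either, and you say so yourself. The reformulation $f(B)=\frac{1}{2|B|}\bigl(1-2\sum_{m\neq(1,1)}f(B_m)\bigr)$ is correct and is exactly the kind of recentering the paper uses implicitly in Section~\ref{section:chomp probability}; the reduction of the $\le 2$-row and $\le 2$-column cases to Theorems~\ref{chomp:probability}--\ref{chomp:corollary} via transpose symmetry is also right. But, as you diagnose, the induction cannot close with only the qualitative hypothesis $f(B_m)>0$: you need an upper bound on $\sum_{m\neq(1,1)} f(B_m)$, and the crude consequence $f(B_m)\le\frac{1}{2|B_m|}$ is too weak because the number of moves that delete only $O(1)$ cells grows with $|B|$. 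Your observation that what is really required is a quantitative two-sided estimate extending Theorem~\ref{chomp:corollary} to general shapes matches the paper's own assessment and its Conjecture~2. The alternative parity/coupling angle you sketch is natural, but as you note it must genuinely exploit two-dimensionality (since $P(n,0)=1/2$ exactly), and no such argument is known.

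In short: your proposal is an honest outline of why the problem is hard rather than a proof, and it lines up with the paper's stated view that the conjecture is open.
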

This conjecture holds for every configuration we calculated, but proving it would presumably involve some level of insight into how quickly these probabilities approach 1/2, which motivates the following stronger conjecture.

\begin{conjecture}
There are functions $0 < g(n) < h(n)$ such that $\lim_{n \to \infty} h(n) =0$ with the following property.  If Chomp is to be randomly played with a starting configuration of $n$ cells and more than one row and more than one column, then the player going first will win with probability between $1/2 - h(n)$ and $1/2 - g(n)$.
\end{conjecture}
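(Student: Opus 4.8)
\emph{Proof strategy.} Every assertion in the theorem reduces to controlling the two sequences $\alpha_k$ and $-\beta_k$, so the plan has three stages: (1) establish the sandwich $0<\alpha_k\le -\beta_k\le (k-1)\alpha_k$ for $k\ge 2$ by induction on $k$; (2) deduce the size bounds $4^{k-1}[(k-1)!]^2\le\alpha_k\le k\,4^{k-1}[(k-1)!]^2$; (3) substitute both into the closed form for $P(n,k)$ from Theorem~\ref{chomp:probability}, together with elementary estimates for the central binomial coefficient. I expect stage (1) --- specifically, spotting the linear identities that make the induction close --- to be the only step requiring real insight; everything afterward is bookkeeping, with the one genuine pitfall being the degenerate behavior at $k=1$ (where $-\beta_1=\alpha_1$ while $(1-1)\alpha_1=0$), which forces the small cases to be peeled off and checked by hand rather than fed into the inductive machinery.

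For stage (1), write $\gamma_k:=-\beta_k$, so that $\alpha_{k+1}=(4k^2+k)\alpha_k-\gamma_k$ and $\gamma_{k+1}=k(k+1)\alpha_k+(4k^2-k-1)\gamma_k$. The key observation is that each of the three quantities whose sign we must control at index $k+1$ is a transparently nonnegative combination of $\alpha_k$, of $\gamma_k-\alpha_k$, and of $(k-1)\alpha_k-\gamma_k$; concretely, one verifies the identities
\[
\alpha_{k+1}=(4k^2+1)\alpha_k+\bigl((k-1)\alpha_k-\gamma_k\bigr),
\]
\[
\gamma_{k+1}-\alpha_{k+1}=k(k-1)\alpha_k+k(4k-1)\bigl(\gamma_k-\alpha_k\bigr),
\]
\[
k\alpha_{k+1}-\gamma_{k+1}=(4k^2-1)\bigl(k\alpha_k-\gamma_k\bigr)=(4k^2-1)\Bigl(\alpha_k+\bigl((k-1)\alpha_k-\gamma_k\bigr)\Bigr).
\]
Assuming $0<\alpha_k\le\gamma_k\le (k-1)\alpha_k$ (the base case $k=2$ holds since $\alpha_2=\gamma_2=4$), the right-hand sides above are positive, nonnegative, and positive respectively, which yields $0<\alpha_{k+1}\le\gamma_{k+1}\le k\alpha_{k+1}$ and closes the induction.

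For stage (2), the first identity together with $\alpha_k\le\gamma_k\le (k-1)\alpha_k$ (valid for $k\ge 2$) gives $4k^2\alpha_k\le\alpha_{k+1}\le (4k^2+k-1)\alpha_k$. Since $4^k[k!]^2=4k^2\cdot 4^{k-1}[(k-1)!]^2$, the lower bound propagates immediately, and the upper bound propagates because $(4k^2+k-1)k\le 4k^2(k+1)$; the cases $k=1,2$ are checked directly. Writing
\[
\frac{\alpha_k}{(2(k-1))!}=\frac{\alpha_k}{4^{k-1}[(k-1)!]^2}\cdot\frac{4^{k-1}}{\binom{2k-2}{k-1}}
\]
and inserting the standard estimates $\frac{4^m}{2\sqrt m}\le\binom{2m}{m}\le\frac{4^m}{\sqrt{3m+1}}$ (so that $\sqrt{3k-2}\le \frac{4^{k-1}}{\binom{2k-2}{k-1}}\le 2\sqrt{k-1}$ when $k\ge 2$), the $\alpha_k$-sandwich becomes $\sqrt k\le\frac{\alpha_k}{(2(k-1))!}\le 2k^{1.5}$ for $k\ge 2$.

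For stage (3), first $n\alpha_k+\beta_k=n\alpha_k-\gamma_k\ge (n+1-k)\alpha_k>0$ whenever $n\ge k\ge 2$ (using $\gamma_k\le(k-1)\alpha_k$), and the cases $k\in\{0,1\}$ follow from the explicit values $P(n,0)$ and $P(n,1)$ in Theorem~\ref{chomp:probability} (here $P(n,1)=\tfrac12-\tfrac{1}{n(n+1)}$); since the fraction subtracted in the closed form is then nonnegative, $P(n,k)\le\tfrac12$. For the refined bounds with $k\ge 2$, combining $n\alpha_k+\beta_k\ge (n+1-k)\alpha_k$ with the lower estimate for $\alpha_k/(2(k-1))!$ gives $\frac{n\alpha_k+\beta_k}{(2(k-1))!}\ge (n+1-k)\sqrt k$, hence the stated upper bound on $P(n,k)$, which is strictly below $\tfrac12$ because $n\ge k$ forces $n+1-k\ge 1$; dually, $n\alpha_k+\beta_k\le n\alpha_k$ together with the upper estimate gives $\frac{n\alpha_k+\beta_k}{(2(k-1))!}\le 2nk^{1.5}$, hence the stated lower bound. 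The remaining small cases $k\le 1$ are once more disposed of from the explicit formulas. The only place demanding care, as flagged, is keeping these $k\le 1$ boundary cases out of the induction; with the identities of stage (1) in hand, every other step is a one-line verification.
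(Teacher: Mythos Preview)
Your proposal is a careful and essentially correct proof of Theorem~\ref{chomp:corollary}, but the statement you were asked to address is Conjecture~2, and these are not the same thing. Conjecture~2 concerns \emph{every} Chomp board with more than one row and more than one column---in particular, boards with three, four, or arbitrarily many rows---and asks for functions $g,h$ depending only on the total cell count. The paper does not prove Conjecture~2; it is explicitly stated as open, with the remark that Theorem~\ref{chomp:corollary} establishes the two-row special case.

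Everything in your write-up lives entirely inside the two-row world: the sequences $\alpha_k,\beta_k$, the closed form for $P(n,k)$ from Theorem~\ref{chomp:probability}, and the resulting bounds all presuppose a board with exactly two rows of lengths $n$ and $k$. No analogous closed form is known for boards with three or more rows (indeed, the paper remarks that it could not even formulate a conjecture for the three-row asymptotics), so the machinery you invoke simply does not exist in that generality. Your argument therefore says nothing about, say, a $3\times 3$ board, let alone the full conjecture. As a proof of Theorem~\ref{chomp:corollary} your three-stage plan is sound and matches the paper's own proof in structure (the identities you found in stage~(1) are a slightly cleaner repackaging of the same induction), but it does not touch the statement actually in question.
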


Proving this second conjecture (if in fact true) would presumably involve a judicious guess for the functions $g$ and $h$.  It should be noted that in light of Theorem \ref{chomp:corollary}, we have proven both of these conjectures for boards having at most 2 rows.  In that setting, for fixed $k$, we've shown $P(n,k) -1/2 \approx C_k n^{-2}$, but we weren't able to form any similar conjectures for the behavior of boards having 3 or more rows.

As an application of our results about random Chomp, we are also able to completely analyze random play in the game nim.  A description of this game can be found in Section \ref{section:notation}, but our result for this setting is as follows.

\begin{theorem}\label{theorem:nim}
Suppose two-player nim (either mis\`ere or normal play) is to be played starting with piles of sizes $s_1, s_2, \ldots, s_k$, such that each player's move is chosen independently and uniformly at random.
\begin{itemize}
\item[(a)] The expected number of moves played is given by $\displaystyle \sum_{i=1} ^{k} \sum_{r=1} ^{s_i} \dfrac{1}{r} = \sum_{i=1} ^{k} H_{s_{i}}$, where $\displaystyle H_t = \sum_{r=1} ^{t} \dfrac{1}{r}$ denotes the $t^{\text{th}}$ Harmonic number.
\item[(b)] If each pile has size $1$, then the game lasts exactly $k$ turns, and the winner depends only on whether $k$ is even or odd.
\item[(c)] Otherwise, if at least one pile has size more than 1, then each player wins with equal probability.
\end{itemize}
\end{theorem}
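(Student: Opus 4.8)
The plan is to reduce everything to the behaviour of a \emph{single} pile via a decoupling observation. From a position with pile sizes $(v_1,\dots,v_k)$ the number of legal moves is $\sum_i v_i$ (pile $i$ may be reduced to any of $0,1,\dots,v_i-1$), so selecting a legal move uniformly at random is the same as first choosing a pile $i$ with probability $v_i/\sum_j v_j$ and then choosing the new size of pile $i$ uniformly from $\{0,1,\dots,v_i-1\}$. Hence the random $k$-pile game couples with $k$ independent ``random descent'' processes, one per pile: for each $i$ independently pregenerate a strictly decreasing sequence $s_i=v^{(i)}_0>v^{(i)}_1>\cdots>v^{(i)}_{\ell_i}=0$ in which each term is uniform among the integers below its predecessor, and then run the game by repeatedly picking a pile (with probability proportional to its current size) and advancing it one step along its pregenerated sequence. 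Under this coupling the total number of turns is $N=\ell_1+\cdots+\ell_k$ no matter in what order piles get selected, and the $\ell_i$ are \emph{independent}, with $\ell_i$ distributed as the length $\ell(s_i)$ of a random descent started from $s_i$.

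Part (a) is then immediate: $\mathbb{E}[N]=\sum_i\mathbb{E}[\ell_i]$, and $\mathbb{E}[\ell(s_i)]$ is exactly the expected number of turns of random Chomp played on a single row of $s_i$ cells, which by Theorem \ref{chomp:expected} equals $\sum_{y=1}^{s_i}\frac{1}{1\cdot y}=H_{s_i}$ (alternatively, $\mathbb{E}[\ell(0)]=0$ and $\mathbb{E}[\ell(s)]=1+\frac1s\sum_{j=0}^{s-1}\mathbb{E}[\ell(j)]$, whose solution is $H_s$). Part (b) is also immediate from the coupling: if every $s_i=1$ then each $\ell_i=1$ deterministically, so $N=k$ with probability $1$; the final (i.e.\ $k$-th) move is made by Player 1 exactly when $k$ is odd, and since the winner of nim---under either convention---is determined by who makes the last move, the outcome depends only on the parity of $k$.

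For part (c), observe that in both normal and mis\`ere play the winner is a fixed function of the parity of $N$ (Player 1 wins iff $N$ is odd in normal play, iff $N$ is even in mis\`ere play). Thus each player wins with probability $1/2$ as soon as $N$ is equally likely even or odd, i.e.\ as soon as $\mathbb{E}[(-1)^N]=0$. By independence of the $\ell_i$ this factors as $\mathbb{E}[(-1)^N]=\prod_{i=1}^{k}\mathbb{E}[(-1)^{\ell(s_i)}]$, so it is enough to make one factor vanish. Writing $f(s)=\mathbb{E}[(-1)^{\ell(s)}]$, the descent recursion gives $f(0)=1$, $f(1)=-1$, and $f(s)=-\frac1s\sum_{j=0}^{s-1}f(j)$ for $s\ge 1$; a one-line induction then yields $f(s)=0$ for every $s\ge 2$. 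Hence if some pile has size at least $2$ the corresponding factor is $0$, so $\mathbb{E}[(-1)^N]=0$ and both players win with probability $1/2$.

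I expect the only genuine obstacle to be making the decoupling step rigorous: verifying that pregenerating the per-pile descent paths is legitimate (the conditional law of pile $i$'s next value, given that pile $i$ is selected, is uniform below its current size and independent of the other piles) and that the total turn count is truly independent of the order in which piles are advanced. Everything downstream is short---the harmonic recursion for (a), a triviality for (b), and the vanishing of $f(s)$ for $s\ge 2$ for (c).
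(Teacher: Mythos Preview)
Your proposal is correct and follows essentially the same approach as the paper: both decompose the total number of turns as a sum of independent per-pile ``descent lengths'' (the paper phrases this as one-row Chomp), derive (a) from linearity and the harmonic recursion, and obtain (c) by showing a single pile of size $\geq 2$ has uniformly random parity. Your explicit pregenerated-path coupling is a cleaner justification of the independence than the paper's bare assertion, and your computation $f(s)=0$ for $s\ge 2$ is exactly the paper's derivation of $P(n,0)=1/2$ inside Theorem~\ref{chomp:probability}, so the arguments coincide.
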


\subsection*{Outline of the paper}
We begin in Section \ref{section:notation} with brief discussion of general remarks and notation as well as a definition of the game nim.  In Section \ref{section:chomp expected value} we discuss expected game lengths in Chomp proving Theorems \ref{chomp:expected} and \ref{chomp:extremal shapes}.  Section \ref{section:chomp probability} then focuses on Chomp with at most two-rows proving Theorems \ref{chomp:probability} and \ref{chomp:corollary}.  We then analyze nim in Section \ref{section:nim} proving Theorem \ref{theorem:nim}.  Finally, we conclude in Section \ref{section:conclusion} with a brief discussion of possible directions for further research.

\section{General notation}\label{section:notation}
Throughout, all logs are understood to have base $e$.  We use $o(1)$ to denote a function tending to $0$ as the relevant parameter---typically $n$---tends to infinity.

\subsection*{Notation for Chomp}
Let $|B|$ denote the number of cells in the board $B$.  We write $D \lessdot B$ to mean that $D$ is a configuration obtainable from one move starting from $B$.  Since each cell of $B$ may be chosen for the next move, there are exactly $|B|$ boards $D$ for which $D \lessdot B$.  Say $\mathbb{E}[B]$ denotes the expected number of turns in a game starting with $B$, and $P(B)$ denotes the probability that the next player will win---or equivalently that the previous player will lose---in a game starting with $B$.

By conditioning on which cell of $B$ is selected, we obtain the recursive formula
\begin{equation}\label{eqn:chomp-expectation-recurrence}
\mathbb{E}[B] = 1 + \dfrac{1}{|B|} \sum_{D \lessdot B} \mathbb{E}[D],
\end{equation}
valid when $|B| > 0$.  As a base case, when $|B| = 0$, the empty board satisfies $\mathbb{E}[B] = 0$.

Similarly, if $P(B)$ denotes the probability that the player about to move will win---or equivalently that the previous player will lose---starting from $B$, then we have for all $|B| > 0$ that
\begin{equation}\label{eqn:chomp-probability-recurrence}
P(B) = \dfrac{1}{|B|} \sum_{D \lessdot B} \Big( 1 - P(D) \Big).
\end{equation}
As a base case, when $|B| = 0$, the empty board satisfies $P(B) = 1$.

\subsection*{Definition and notation for nim}
As introduced in 1901 by Bouton \cite{bouton1901nim}, nim is a two player game played with $k$ piles of tokens initially having $s_1, s_2, \ldots , s_k$ tokens in each pile.  The starting configuration is to be agreed upon before starting, and players alternate turns.  Each turn, a player selects a pile and then removes however many tokens they want from that pile (they must remove at least one token, but they may remove the entire pile if they so choose).  Play alternates, and the game ends when all the tokens have been removed.  In the rules of \textit{mis\`ere play} the player who removes the final token is the loser, but in \textit{normal play} that player is instead deemed the winner.  Our result in Theorem \ref{theorem:nim} is true in both \textit{mis\`ere} as well as \textit{normal} play, with the only distinction being that in this situations, the the parity condition in (b) is reversed.

To explicitly define the set of legal moves (which is necessary to define the randomly played version of the game), we will say that a player looking at a configuration of non-empty piles $(s_1, s_2, \ldots, s_k)$ must select one of the following moves:
\begin{itemize}
\item take 1 from pile 1, take 2 from pile 1, \ldots , take $s_1$ from pile 1;
\item take 1 from pile 2, take 2 from pile 2, \ldots , take $s_2$ from pile 2;
\item $\cdots$
\item take 1 from pile $k$, take 2 from pile $k$, \ldots , take $s_k$ from pile $k$.
\end{itemize}
Thus there are a total of $N = s_1 + s_2 + \cdots + s_k$ legal moves available.  The player selects pile $j$ with probability $s_j / N$, and having selected a pile, they remove a uniformly random amount from that pile.

That said, our analysis in Section \ref{section:nim} proves a somewhat more general version of Theorem \ref{theorem:nim} which remains valid merely under the assumption that after selecting a non-empty pile (via whatever probability distribution), the number of tokens removed from the selected pile is chosen uniformly at random.  Our statement of Theorem \ref{theorem:nim} and its subsequent proof are both carefully phrased so as to simultaneously address the cases of \textit{mis\`ere} and \textit{normal} play.

\section{Expected game length in Chomp}\label{section:chomp expected value}
As a helpful warm-up, we begin by proving Theorem \ref{chomp:expected} by induction.

\subsection*{Proof of Theorem \ref{chomp:expected}}
We prove this by induction on the number of cells in the Chomp board.  As a base case, if the board has only one cell, then the formula holds since any game with only one cell takes exactly one move, whereas the formula gives an expected value of $1/(1 \times 1) = 1$ (the case of an empty board requires $0$ moves, which also agrees with the formula under the convention that summing over the empty set yields 0).

Now suppose $B$ is an arbitrary Chomp board and that by induction, the formula holds for all configurations having fewer cells.  By appealing to \eqref{eqn:chomp-expectation-recurrence} and applying induction, we obtain
\[
\mathbb{E}[B] = 1 + \dfrac{1}{|B|} \sum_{D \lessdot B} \mathbb{E}[D] = 1 + \dfrac{1}{|B|} \sum_{D \lessdot B} \sum_{(x,y) \in D} \dfrac{1}{xy}.
\]
Finally, for any cell $(x,y) \in B$, we know there are exactly $|B| - xy$ boards $D \lessdot B$ for which $(x,y) \in D$.  This is because the cell $(x,y)$ would be \textit{missing} from $D$ iff $D$ is obtained from $B$ by chomping a cell $(p,q)$ where $p \leq x$ and $q \leq y$.  As there are exactly $x \cdot y$ such choices for $(p,q)$, there are precisely $|B| - xy$ boards $D \lessdot B$ for which $(x,y) \in D$.  Thus, by exchanging the order of summation we have

\begin{eqnarray*}
\mathbb{E}[B] &=& 1 + \dfrac{1}{|B|} \sum_{D \lessdot B} \mathbb{E}[D] = 1 + \dfrac{1}{|B|} \sum_{D \lessdot B} \sum_{(x,y) \in D} \dfrac{1}{xy}\\
&=& 1 + \dfrac{1}{|B|} \sum_{(x,y) \in B} \sum_{\substack{D\lessdot B, \\ D \ni (x,y)}}  \dfrac{1}{xy}\\
&=& 1 + \dfrac{1}{|B|} \sum_{(x,y) \in B} \dfrac{\# \{D \lessdot B \ : \ (x,y) \in D\} }{xy}\\
&=& 1 + \dfrac{1}{|B|} \sum_{(x,y) \in B} \dfrac{|B| - xy}{xy} = \sum_{(x,y) \in B} \dfrac{1}{xy}.
\end{eqnarray*}
Thus, the formula holds for $B$, completing the induction proof. $\qed$

\subsection*{Proof of Theorem \ref{chomp:extremal shapes}}
\paragraph*{Lower bound:} Consider the \textit{lexicographic} ordering on cells $(x,y)$---namely, $(a,b) \preceq_{lex} (c,d)$ iff either $a < c$ or $a=c$ and $b \leq c$.  Then for each cell $(x,y) \in B$, define its \textit{rank} to be the number of cells $(p,q) \in B$ such that $(p,q) \preceq_{lex} (x,y)$.

We claim that for each $(x,y) \in B$, we must have $rank(x,y) \geq xy$.  To see this, note that if $(x,y) \in B$, then for each $(p,q)$ where $p \leq x$ and $q \leq y$, it must be the case that $(p,q) \in B$ as well.  Moreover, there are exactly $xy$ such values of $(p,q)$ in $B$, and each one lexicographically precedes [or is equal to] $(x,y)$.  Thus, $rank(x,y) \geq xy$.

From this, we have
\[
\mathbb{E}[B] = \sum_{(x,y) \in B} \dfrac{1}{xy} \geq \sum_{(x,y) \in B} \dfrac{1}{rank(x,y)} = \sum_{r=1}^{|B|} \dfrac{1}{r} = H_{|B|},
\]
which is the expected number of turns required when starting with the board consisting of a single row having $|B|$ cells.

\paragraph*{Upper bound:} For a quick upper bound, note that the board $B$ has at most $|B|$ rows, and each of row has at most $|B|$ columns.  Thus
\[
\mathbb{E}[B] = \sum_{(x,y) \in B} \dfrac{1}{xy} \leq \sum_{x=1} ^{|B|} \sum_{y=1} ^{|B|} \dfrac{1}{xy} = \left ( \sum_{r=1} ^{|B|} \dfrac{1}{r} \right)^2.
\]

To obtain a tight upper bound, we need to define the family of boards for which $\mathbb{E}[B]$ is maximized (given the number of cells).  To this end, for each $t$, define $\mathcal{P}_t = \{ (x,y) \in \mathbb{Z}^2 \ : xy \leq t, \ \ \ 1 \leq x, \ \ 1 \leq y \}$.  Let $\tau$ be the integer for which $|\mathcal{P}_\tau| \leq |B| < |\mathcal{P}_{\tau+1}|$, and let $B'$ denote any board having $|B|$ cells for which $\mathcal{P}_\tau \subseteq B' \subset \mathcal{P}_{\tau+1}.$  Clearly, by considering our formula from Theorem \ref{chomp:expected}, each such board has the same expected game length since every cell $(x,y) \in \mathcal{P}_{\tau+1} \setminus \mathcal{P}_{\tau}$ satisfies $xy = \tau+1$.  It is also clear that the board $B'$ has the largest expected game length among all boards with $|B|$ cells (since $B'$ is constructed greedily so that every cell of $B'$ contributes as much as possible to the expected value by maximizing $1/xy$).  So to finish the proof of the upper bound, we need only estimate $\mathbb{E}[B']$.

For this, note
\[
\mathbb{E}[B] \leq \mathbb{E}[B'] = \dfrac{|B' \cap \mathcal{P}_{\tau+1}|}{\tau +1} + \sum_{(x,y) \in \mathcal{P}_{\tau}} \dfrac{1}{xy} = \dfrac{|B' \cap \mathcal{P}_{\tau+1}|}{\tau +1} + \sum_{xy \leq \tau} \dfrac{1}{xy}.
\]

These quantities are well-known in analytic number theory, and  we direct the curious reader to Apostol's text \cite{apostol} for a discussion of this---including the notation to follow.  For example, exercise 2 of chapter 3 of this text proves
\[
\sum_{xy \leq t} \dfrac{1}{xy} = \sum_{m=1} ^{t} \dfrac{d(m)}{m} = \log(t)^2/2 + 2\gamma \log(t) + \mathcal{O}(1),
\]
where $\gamma = 0.577\ldots$ denotes the Euler-Mascheroni constant (and $d(m)$ denotes the divisor function).  Since $|B' \cap P_{\tau +1}| \leq |P_{\tau+1} \setminus P_{\tau}| = d(\tau +1)$, we have $0 \leq |B' \cap P_{\tau+1}| / (\tau +1) \leq \dfrac{d(\tau+1)}{\tau +1} \leq 1$, and thus we obtain
\[
\mathbb{E}[B'] =  \dfrac{|B' \cap \mathcal{P}_{\tau+1}|}{\tau +1} + \sum_{xy \leq \tau} \dfrac{1}{xy} = \log(\tau)^2/2 + 2\gamma \log(\tau) + \mathcal{O}(1).
\]

Since $\mathcal{P}_{\tau} \subseteq B' \subset \mathcal{P}_{\tau+1}$, it follows that $|\mathcal{P}_{\tau}| \leq |B| < |\mathcal{P}_{\tau+1}|$.  And by appealing to Theorem 3.3 of \cite{apostol}, we have
\[
|\mathcal{P}_t| = \sum_{(x,y), xy \leq t} 1 = \sum_{m=1} ^{t} d(m) = t \log(t) + (2 \gamma -1) t + \mathcal{O}(\sqrt{t}).
\]
Thus, $|B| = \tau \log(\tau) + (2\gamma - 1) \tau + \mathcal{O}(\sqrt{\tau})$, meaning $\tau \sim |B| / \log(|B|)$, which proves
\[
\log(\tau) = \log(|B|) - \log \log(|B|) + o(1),
\]
establishing the estimate $\mathbb{E}[B'] = (1/2 + o(1)) \log(|B|)$.  And using the well-known fact that the Harmonic numbers satisfy $H_N = \sum_{r=1} ^{N} \frac{1}{r} = (1+o(1))\log(N)$ completes the proof (see, e.g., Theorem 3.2 of \cite{apostol}).  $\qed$

\section{Two-rowed Chomp: win probability}\label{section:chomp probability}
We now turn our attention to proving Theorems \ref{chomp:probability} and \ref{chomp:corollary}.  Throughout, let $P(n,k)$ denote the probability that the next player will win---equivalently that the previous player will lose---starting from a board having $n$ cells in its first row and $k$ in its second.

\subsection*{Proof of Theorem \ref{chomp:probability}}
We essentially prove this by induction, with $k=0$ as a special case.

\paragraph*{Formula for $k=0$:} First note that if $B$ is the empty board, then the game has ended and the previous player has lost.  Thus $P(0,0) = 1$.  Moreover, if $n \geq 1$, then the recursive formula \eqref{eqn:chomp-probability-recurrence} yields
\[
P(n,0) = 1 - \dfrac{1}{n} \Big(\mathbb{P}(P(0,0) + \cdots + P(n-1,0) \Big).
\]
Therefore, we obtain $P(1,0) = 1 - (1/1) (P(0,0)) = 1 - 1 = 0$.  And for all $n \geq 2$ we therefore have
\begin{eqnarray*}
P(n,0) &=& 1 - \dfrac{1}{n} \Big(\mathbb{P}(P(0,0) + \cdots + P(n-1,0) \Big)\\
&=& \dfrac{1}{n} \left(0 + 1 + \sum_{j=2} ^{n-1} \Big( 1-P(j,0) \Big) \right).
\end{eqnarray*}
Now by induction on $n$ (with $k=0$ fixed), we can assert that for each $j \in \{2, 3, \ldots, n-1\}$ we know $P(j,0) = 1/2$, which would in turn yield
\begin{eqnarray*}
P(n,0) &=& \dfrac{1}{n} \left(0 + 1 + \sum_{j=2} ^{n-1} \Big( 1-\mathbb{P}(W(B_{j,0})) \right)\\
&=& \dfrac{1}{n} \left(1 + \sum_{j=2} ^{n-1} 1/2 \right) = 1/2,
\end{eqnarray*}
thus proving that for all $n \geq 2$ we have $P(n,0) = 1/2$, as desired.

\paragraph*{Formula for $k \geq 1$:}
For potential ease of presentation, we'll prove that our formula works by contradiction.  In fact, suppose $(n,k)$ is a point where the formula fails, and suppose $(n,k)$ is chosen (subject to the constraint that $n \geq k \geq 1$) to be a minimal counterexample in the sense that if we have any other point $(x,y)$ where $x \geq y \geq 1$ and either $x < n$ or $y < k$, then the stated formula holds for $(x,y)$.

Again, by \eqref{eqn:chomp-probability-recurrence} we have
\begin{eqnarray*}
P(n, k) &=& 1 - \dfrac{1}{n+k} \left( \sum_{j=0} ^{k-1} P(j,j) + \sum_{j=k} ^{n-1} P(j,k) + \sum_{j=0} ^{k-1} P(n,j) \right)\\
&=& \dfrac{1}{2} - \dfrac{1}{n+k} \Bigg([P(0,0) + P(n,0)] - (1) \\
& & \qquad \qquad \qquad + \sum_{j=1} ^{k-1} [P(j,j)-1/2] + \sum_{j=1} ^{k-1} [P(n,j) -1/2] \\
& & \qquad \qquad \qquad \qquad + \sum_{j=k} ^{n-1} [P(j,k)-1/2] \Bigg).
\end{eqnarray*}
Using the above (together with the proven facts that $P(0,0) = 1$ and $P(1,0) = 0$) shows that $P(1,1) = 1/2$ as required.  Thus, we may assume that $(n,k) \neq (1,1)$, meaning $n \geq 2$.  Furthermore, by the minimality of $(n,k)$---equivalently, by an appropriately stated induction---and the fact that $P(n,0) = 1/2$ (since $n \geq 2$) we have
\[
P(n, k) = \dfrac{1}{2} - \dfrac{1}{n+k} \left( \dfrac{1}{2} - \sum_{j=1} ^{k-1} C(j,j) - \sum_{j=1} ^{k-1} C(n,j) - \sum_{j=k} ^{n-1} C(j,k) \right),
\]
where $C(x,y) = \dfrac{x \alpha_y + \beta_y}{(2(y-1))! \cdot (x+y)(x+y-1)(x+y-2)}$ whenever $(x,y) \neq (1,1)$ and $C(1,1) = 0$.
But we may then apply Lemma \ref{lemma:induction}---stated and proven below---to conclude
\[
P(n, k) = \dfrac{1}{2} - \dfrac{1}{n+k} \left( \dfrac{n \alpha_k + \beta_k}{(2(k-1))! \cdot (n+k-1)(n+k-2)} \right),
\]
and so $P(n,k)$ also satisfies the required formula, proving Theorem \ref{chomp:probability}.

\begin{lemma}\label{lemma:induction}
For any real $x > 1$ and any integer $y \geq 1$ define
\[
C(x,y) = \dfrac{x \alpha_y + \beta_y}{(2(y-1))! \cdot (x+y)(x+y-1)(x+y-2)}
\]
and $C(1,1) = 0$.  For all integers $n \geq k \geq 1$, if $(n,k) \neq (1,1)$ then
\[
(n+k) C(n,k) = \dfrac{1}{2} - \sum_{j=1} ^{k-1} C(j,j) - \sum_{j=1} ^{k-1} C(n,j) - \sum_{j=k} ^{n-1} C(j,k).
\]
\end{lemma}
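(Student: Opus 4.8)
The plan is to prove the identity by induction on $k$. The case $k=1$ is handled by a direct telescoping: there $C(1,1)=0$ and $C(j,1)=\tfrac1{j(j+1)}$ for $j\ge2$, the sums $\sum_{j=1}^{k-1}$ are empty, and the assertion collapses to $(n+1)C(n,1)=\tfrac12-\sum_{j=1}^{n-1}C(j,1)$, which is immediate since $\sum_{j=2}^{n-1}\tfrac1{j(j+1)}=\tfrac12-\tfrac1n$ and $(n+1)C(n,1)=\tfrac1n$. For $k\ge2$, assuming the lemma at $k-1$ for every admissible argument, the first move is to remove the two $\sum_{j=1}^{k-1}$ terms using the $(n,k-1)$ instance: solving that instance for $\tfrac12-\sum_{j=1}^{k-2}C(j,j)-\sum_{j=1}^{k-2}C(n,j)$ and then peeling off the $j=k-1$ summands shows that the claim at $(n,k)$ is equivalent to
\[
(n+k)\,C(n,k)=(n+k-2)\,C(n,k-1)+\sum_{j=k}^{n-1}\bigl(C(j,k-1)-C(j,k)\bigr).
\]

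I would then establish this reformulated identity for all integers $n\ge k$ by a second induction, this one on $n$, which no longer refers to the lemma at other arguments. When $n=k$ the sum is empty, so the identity reads $k\,C(k,k)=(k-1)\,C(k,k-1)$; clearing the factorials $(2(k-1))!$, $(2(k-2))!$ and the products of three consecutive integers in the definition of $C$, this is exactly $k\alpha_k+\beta_k=4(k-1)^2\bigl(k\alpha_{k-1}+\beta_{k-1}\bigr)$, which one checks directly from the two recurrences defining $\alpha$ and $\beta$. For the inductive step, subtracting the $n$-instance of the reformulated identity from its $(n+1)$-instance and rearranging leaves the single relation
\[
(n+k+1)\,C(n+1,k)-(n+k-1)\,C(n,k)=(n+k-1)\,C(n+1,k-1)-(n+k-3)\,C(n,k-1),
\]
and inserting the closed form for $C$ and using the recurrences to express $\alpha_k,\beta_k$ through $\alpha_{k-1},\beta_{k-1}$ turns this into an equality of two polynomials in $n$ of degree at most two---a finite coefficient check. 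Thus every piece of the argument reduces to the recurrences, matching the remark in the introduction that the truth of this proof's final sentence is equivalent to the validity of the recursive definitions of $\alpha$ and $\beta$.

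The conceptual content is therefore entirely in those recurrences, and the step I expect to be genuinely delicate is the bookkeeping in the two verifications above---carrying the factorials and the shifted cubic products $(n+k)(n+k-1)(n+k-2)$ accurately through the cancellations, and handling the boundary indices (the $j=k-1$ term when collapsing the sums, the case $n=k$, and the exceptional point $(1,1)$) so that no summand is double counted or dropped. If one prefers, the inner induction on $n$ can be avoided altogether by checking that $C(j,\ell)=\psi_\ell(j)-\psi_\ell(j+1)$ for the explicit antiderivative $\psi_\ell(j)=\bigl(2\alpha_\ell j+\beta_\ell+(\ell-2)\alpha_\ell\bigr)\big/\bigl(2(2\ell-2)!\,(j+\ell-2)(j+\ell-1)\bigr)$, so that $\sum_{j=k}^{n-1}C(j,\ell)$ telescopes to $\psi_\ell(k)-\psi_\ell(n)$, together with the observation that $(m+\ell)C(m,\ell)-\psi_\ell(m)$ collapses to $\bigl(\beta_\ell-(\ell-2)\alpha_\ell\bigr)\big/\bigl(2(2\ell-2)!\,(m+\ell-1)(m+\ell-2)\bigr)$; substituting these into the reformulated identity reduces it to one rational-function identity plus one numerical identity, both once more equivalent to the recurrences.
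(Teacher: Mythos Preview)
Your proposal is correct. Both your argument and the paper's handle $k=1$ first and then proceed by induction on $k$, with every remaining verification reducing to a symbolic identity that follows from the recurrences defining $\alpha$ and $\beta$; in that sense the content is the same. The organizational difference is that the paper first writes down an explicit closed form $g(n,k)$ for $\sum_{j=k}^{n-1} C(j,k)$ (this is your $\psi_k(k)-\psi_k(n)$ in slightly different packaging), verifies it by checking $g(n+1,k)-g(n,k)=C(n,k)$, and then runs the induction on $k$ directly on the full identity with $g$ substituted for the sum. Your primary route instead invokes the inductive hypothesis at $(n,k-1)$ first to strip away the two $\sum_{j=1}^{k-1}$ sums, arriving at your reformulated identity, and only then disposes of the remaining sum by an inner induction on $n$. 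Your alternative via the antiderivative $\psi_\ell$ is precisely the paper's $g$-approach. Your primary organization buys you not having to guess the closed form $g$; the paper's buys a single symbolic check for the induction step rather than a nested induction. Incidentally, your final ``degree at most two'' claim is generous: after clearing the common cubic denominator $(n+k)(n+k-1)(n+k-2)$, the identity you need is actually independent of $n$, namely $(k-2)\alpha_k-\beta_k=(2k-2)(2k-3)\bigl((k-1)\alpha_{k-1}-\beta_{k-1}\bigr)$, which follows directly from the recurrences.
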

\begin{proof}
We first prove the lemma when $k=1$, which is easily done by induction.  As a base case, when $(n,k) = (2,1)$ we have---using the fact that $\alpha_1 = 1$ and $\beta_1 = -1$---that $(2+1) C(2,1) = 3/6 = 1/2$, whereas the expression on the left-hand-side is simply $1/2$.  Furthermore, if $n > 2$, the left-hand-side of the desired formula is
\[
\dfrac{1}{2} - \sum_{j=1} ^{n-1} C(j,1) = -C(n-1, 1) + \dfrac{1}{2} - \sum_{j=1} ^{n-2} C(j,1).
\]
By induction, this is equal to $-C(n-1,1) + (n-1 +1) C(n-1,1)$, which is $(n-1) C(n-1, 1)$.  And since $(n-1) C(n-1,1) = 1/n = (n+1)C(n,1)$, this proves the lemma holds when $k=1$.

Now let $k \geq 2$, and for all $n > 0$ define
\[
g(n,k) = \dfrac{(n-k) \Big[\alpha_k (k^2 + 3k (n-1) -2n+2)  + \beta_k (3k+n-3) \Big]}{4(k-1) (2k-1)(2 (k-1))! (n+k-1)(n+k-2)}.
\]
We claim that for all integers $k \geq 2$ and every integer $n \geq k$ we have

\[
g(n,k) = \sum_{j=k} ^{n-1} C(j,k).
\]
In fact, this too is readily proven by induction on $n$ (letting $k \geq 2$ be fixed but arbitrary).  As a base case, we see from its definition that $g(k,k) = 0$, which agrees with the empty summation.  And the induction step follows immediately after using the definition of $g$ to symbolically verify the fact that $g(n+1,k) - g(n,k) = C(n+1, k)$.

Finally, we will prove that for all integers $k \geq 2$ and for all real numbers $n > 1$ we have
\[
(n+k) C(n,k) = \dfrac{1}{2} - \sum_{j=1} ^{k-1} C(j,j) - \sum_{j=1} ^{k-1} C(n,j) - g(n,k).
\]
For this, let $n > 1$ be fixed but arbitrary, and we proceed by induction on $k$.  As a base case, when $k=2$, the claim holds by directly verifying $(n+2)C(n,2) = 1/2 - g(n,2)$ from the definitions.  As for the induction step, we proceed by symbolically verifying $(n+k+1)C(n,k+1) - (n+k)C(n,k)$ equals $-C(k,k) - C(n,k) - g(n,k+1) + g(n,k)$.  Doing so is in fact immediate after rewriting each of the terms $\alpha_{k+1}$ and $\beta_{k+1}$ appearing in $g(n,k+1)$ as an appropriate linear combination of $\alpha_k$ and $\beta_k$ by using the defining recurrence relation for $\alpha$ and $\beta$.
\end{proof}

\subsection*{Proof of Theorem \ref{chomp:corollary}}
We first prove for all $k \geq 2$ that $0 < \alpha_k \leq -\beta_k \leq (k-1) \alpha_k$, done by induction on $k$.  The base case follows since $\alpha_2 = 4$ and $\beta_2 = -4$.

Now for the induction step, we have
\[
\alpha_{k+1} = (4k^2 + k) \alpha_k + \beta_k \geq (4k^2 + k) \alpha_k - (k-1) \alpha_k = (4k^2+1) \alpha_k > 0.
\]
Moreover, 
\begin{eqnarray*}
\alpha_{k+1} &=& (4k^2 + k) \alpha_k + \beta_k \leq (k^2 + k) \alpha_k +3k^2 \alpha_k + \beta_k\\
&\leq& (k^2 + k) \alpha_k -3k^2 \beta_k + \beta_k = -\beta_{k+1} + (k^2 + k)\beta_{k} \leq -\beta_{k+1}.
\end{eqnarray*}
And finally
\begin{eqnarray*}
-\beta_{k+1} &=& k(k+1) \alpha_k - (4k^2)\beta_k +  (k+1) \beta_k\\
&\leq& k(k+1) \alpha_k + (4k^2)(k-1) \alpha_k +  (k+1) \beta_k\\
&=& k \alpha_{k+1} - k(4k-1) \alpha_k + \beta_k \leq k \alpha_{k+1}.
\end{eqnarray*}
Thus we have $0 < \alpha_{k+1} \leq -\beta_{k+1} \leq k \alpha_{k+1}$, which proves by induction that these inequalities hold for all $k$.

Now for each $k \geq 1$, we know $\alpha_{k+1} = (4k^2+k) \alpha_k + \beta_k$.  Thus
\[
4 k^2 \alpha_k \leq 4(k+1)k \alpha_{k} - (k-1) \alpha_k \leq \alpha_{k+1} \leq 4(k+1)k\alpha_k,
\]
which means for all $k \geq 1$ that $4^{k-1} [(k-1)!]^2 \leq \alpha_k \leq 4^{k-1} [(k-1)!]^2 \cdot k$.

Finally, we need only bound $P(n,k)$.  For this, it is easy to see the desired upper bounds on $P(n,0)$ and $P(n,1)$ simply by appealing to Theorem \ref{chomp:probability}.  We first note that for all $k \geq 1$ we have
\[
\dfrac{4^{k-1} [(k-1)!]^2}{(2(k-1))!} \leq \dfrac{\alpha_k}{(2(k-1))!} \leq k \dfrac{4^{k-1} [(k-1)!]^2}{(2(k-1))!}.
\]
And by using a standard bound on the central binomial coefficients (e.g., Proposition 3.6.2 of \cite{matouvsek2009invitation}), we further see for all $k \geq 2$ that
\[
\sqrt{k} \leq \sqrt{2 (k-1)} \leq \dfrac{4^{k-1} \cdot [(k-1)!]^2}{(2(k-1))!} \leq 2 \sqrt{k-1} \leq 2 \sqrt{k}.
\]
Combining this $\beta_k < 0$ yields by Theorem \ref{chomp:probability} that for all $k \geq 2$,
\begin{eqnarray*}
\dfrac{1}{2} - P(n,k) &=& \dfrac{n \alpha_k + \beta_k}{(2(k-1))! \cdot (n+k)(n+k-1)(n+k-2)}\\
&\leq& \dfrac{n \alpha_k}{(2(k-1))! \cdot (n+k)(n+k-1)(n+k-2)}\\
&\leq& \dfrac{2n k^{1.5}}{(n+k)(n+k-1)(n+k-2)}.
\end{eqnarray*}
Similarly, using that $-\beta_k \leq (k-1) \alpha_k$, we obtain
\begin{eqnarray*}
\dfrac{1}{2} - P(n,k) &=& \dfrac{(n-k+1) \alpha_k + (k-1)\alpha_k + \beta_k}{(2(k-1))! \cdot (n+k)(n+k-1)(n+k-2)}\\
&\geq& \dfrac{(n-k+1) \alpha_k}{(2(k-1))! \cdot (n+k)(n+k-1)(n+k-2)}\\
&\geq& \dfrac{(n-k+1) \sqrt{k}}{(n+k)(n+k-1)(n+k-2)},
\end{eqnarray*}
which completes the proof. $\qed$

\section{Analyzing nim: proof of Theorem \ref{theorem:nim}}\label{section:nim}
To prove Theorem \ref{theorem:nim}, we could simply appeal to an induction argument as we did in the case of Chomp, but it would be faster (and likely clearer) to instead derive Theorem \ref{theorem:nim} from the results we've already proven about Chomp.

Let us consider an arbitrary nim configuration with piles of size $s_1, s_2, \ldots, s_k$.  For each pile $i$, let $T_i$ denote the total number of times that pile $i$ was selected before it was depleted.  Then clearly the total number of turns required is $T = T_1 + T_2 + \cdots + T_k$.  Moreover, the variables $\{T_1, T_2, \ldots, T_k\}$ are independent.

For each $i$, consider a random game of Chomp starting with a board having $s_i$ cells all in the same row.  Let $C_i$ denote the random variable for how many turns would be required in that game.  Then each $T_i$ has the same distribution as $C_i$ because one-pile-nim and one-row-Chomp are the same game.\footnote{The only distinction is that the ``winners" in these games might be different depending on whether nim is being played under the mis\`ere or normal rule.}

Thus the number of turns in this nim game has the same distribution as the sum of $k$ independent one-row-Chomp game lengths.

\paragraph*{Expected length:} From this (and using $\mathbb{E}[T_i] = \mathbb{E}[C_i] = H_{s_i}$), we immediately see that the expected number of turns in nim is equal to
\[
\mathbb{E}[T] = \mathbb{E}\left[ \sum_{i=1} ^{k} T_i \right] = \sum_{i=1} ^{k} \mathbb{E}\left[ T_i \right] = \sum_{i=1} ^{k} \left[  \sum_{n=1} ^{s_i} \dfrac{1}{n}\right] = \sum_{i=1} ^{k} H_{s_i}.
\]

\paragraph*{Win probabilities:} The win probabilities are entirely determined by the parity of $T$ (with ``even" and ``odd" corresponding to who wins according to whether the mis\`ere or normal rule is in effect).  For that, note that if each pile has size $1$, then each $T_i$ is deterministically equal to $1$, so $T = k$, proving statement (b) of Theorem \ref{theorem:nim}.

On the other hand, if some pile $T_{p}$ has size more than 1, then by Theorem \ref{chomp:probability} (for the case of one-row-Chomp), the variable $T_p$ is equally likely to be even as to be odd.  Thus, since the variables $\{T_1, T_2, \ldots, T_{k}\}$ are independent, $T = T_1 + T_2 + \cdots + T_k$ is \textit{also} uniform modulo 2.  Therefore, such a game of nim would have both players equally likely to win, proving statement (c) of the theorem. $\qed$

\section{Concluding remarks}\label{section:conclusion}
Perhaps the most obvious direction for future work would be to attempt generalizing Theorems \ref{chomp:probability} and \ref{chomp:corollary} to Chomp boards with more than two rows, with Conjectures 1 and 2 being particularly interesting.  Another natural line of questioning would be to extend this work to the study of other combinatorial games played randomly (e.g., hackenbush or various subtraction games).  In those settings, we would immediately have recursive formulas such as \eqref{eqn:chomp-expectation-recurrence} and \eqref{eqn:chomp-probability-recurrence} for expected values and win probabilities, and in fact the question amounts to studying the process of a simple random walk done on the directed graph of game states.

Another possible avenue of study could be to ask other questions about these random processes.  For instance, if $T$ denotes the random variable for how many turns the random game will last, then we studied $\mathbb{E}[T]$ as well as $\mathbb{P}(\text{$T$ is even})$, which is exactly the probability of a given player winning.  But other questions about the distribution of $T$ would be interesting as well.  For Chomp---even with only two rows---the limiting distribution of $T$ would be quite interesting to determine.

\paragraph*{Acknowledgments.}
This work was done as a part of the summer undergraduate research program at Swarthmore College with additional funding for the second author provided through the Frances Velay Women’s Science Research Fellowship Program.

\bibliographystyle{plain}
\bibliography{references.bib}
\end{document}